 \newcommand{\MM}{\mathcal{M}}
 \newcommand{\Ss}{\mathcal{S}}
 \newcommand{\RR}{\mathbf{R}}  
 \newcommand{\eps}{\epsilon}
\def\begfig {
\begin{figure}[b]
\small }
\def\endfig {
\normalsize
\end{figure}
}
    \newtheorem{theorem}    {Theorem}       [section]
    \newtheorem{lemma}      [theorem]       {Lemma}
    \newtheorem{corollary}  [theorem]     {Corollary}
    \newtheorem*{theorem*}{Theorem}
    \newtheorem*{corollary*}{Corollary}
    \theoremstyle{definition}
    \newtheorem{definition}  [theorem] {Definition}
    \theoremstyle{definition}
    \newtheorem{remark}   [theorem]       {Remark}
    \newtheorem{conjecture} [theorem]  {Conjecture}
\begin{document}

\renewcommand{\thesubsection}{\thetheorem}

\title[Bumpy Metrics]{On the Bumpy Metrics Theorem for Minimal Submanifolds}
\author{Brian White}
\address{Department of Mathematics\\ Stanford University\\ Stanford, CA 94305}
\thanks{The research was supported by NSF grants~DMS--1105330 and DMS--1404282.}
\email{white@math.stanford.edu}
\date{March 5, 2015.}
\begin{abstract}
This paper proves several natural generalizations of the theorem
that for a generic, $C^k$ Riemannian metric on a smooth manifold,
there are no closed, embedded, minimal submanifolds with nontrivial jacobi fields.
\end{abstract}
\subjclass[2010]{53A10 (primary), and 49Q05 (secondary)} 

\maketitle

\stepcounter{theorem}

\newcommand{\stab}{\operatorname{stab}}

\section{Introduction}

According to the bumpy metrics theorem of~\cite{white-space}*{2.2},
 for a generic (in the sense of Baire Category) $C^k$ Riemannian metric
on a manifold $N$, there are no closed, embedded, minimal submanifolds with nontrivial
jacobi fields.   (The case of geodesics, including immersed geodesics, was proved earlier in~\cite{abraham-bumpy}).
That theorem leaves open the following questions:
\begin{enumerate}
\item Is the theorem true with $C^\infty$ in place of $C^k$?
\item Is the theorem true for immersed submanifolds?
\item Suppose $G$ is a finite group of diffeomorphisms of $N$.
The proofs in~\cite{white-space} work equally well in the $G$-invariant setting:
for a generic, $G$-invariant, $C^k$ Riemannian metric on $N$, there
are no closed, embedded  $G$-invariant minimal surfaces with 
nontrivial $G$-invariant jacobi fields.   
But is it also true that for a generic $G$-invariant
metric, there are no closed minimal surfaces with nontrivial jacobi fields?
\end{enumerate}

In this paper, we prove that the answer to each of the three questions is ``yes".

The proofs in this paper, which show that these results follow from results in~\cite{white-space},
are fairly simple. (The reader need not be familiar with that paper, provided he or she is willing
to assume the results from that paper.)  However, the issues involved are in some ways subtle.
In particular, one can ask the same questions 
about minimal submanifolds with generic boundaries (the metric being fixed); 
in that setting, the analog of question (3)
is open, even for two dimensional minimal surfaces in $\RR^3$.
See Section~\ref{generalizations-section}.

The reader may wonder why the methods of proof in~\cite{white-space} do not immediately
give the answers to (1), (2), and (3).
The bumpy metrics theorem there is proved as follows.  The space of pairs $(\gamma,M)$
where $\gamma$ is a $C^k$ Riemannian metric on the ambient manifold and where $M$
is a $\gamma$-minimal, embedded submanifold is proved to be a separable Banach
manifold, and the ``bad'' metrics $\gamma$ are precisely the critical values of the
map $(\gamma,M)\mapsto \gamma$, so the bumpy metrics theorem follows immediately
from the Sard-Smale Theorem~\cite{smale-sard}. To handle $C^\infty$ metrics directly,
one would have to use Frechet manifolds rather than Banach manifolds. Presumably
that would work, but we have chosen instead to prove directly that the result for $C^k$ implies the
result for $C^\infty$.  See Theorem~\ref{to-infinity-theorem}.

Concerning question (2), it would be natural to consider the space of pairs $(\gamma,M)$ where $\gamma$ is a $C^k$
metric and $M$ is an {\em immersed}, $\gamma$-minimal manifold.  However, that space
is, in general, {\em not} a Banach manifold.  (It fails to be a manifold at those points $(\gamma,M)$ where $M$ is a multiple
cover of another minimal surface $\Sigma$.)  
Presumably, one could show that the space is a nicely stratified Banach variety and use that to answer question (2),
 but I suspect that would be more complicated than the proof given here.

The same difficulty arises in connection with question (3).
One could consider the space of pairs $(\gamma,M)$ where the metric
$\gamma$ is invariant with respect to a specified finite group $G$ and
where $M$ is a closed, $\gamma$-minimal (but not necessarily $G$-invariant)
submanifold.  Again, in general that space will not be a Banach manifold.
(The problematic points are the pairs $(\gamma,M)$ where $M$ is $G$-invariant
but has a nontrivial jacobi field that is not $G$-invariant.)

\section{The main theorem}

The following is the main result in this paper (see section~\ref{generalizations-section} for some generalizations):

\begin{theorem}\label{main-theorem}
Let $N$ be a smooth manifold.
Let $G$ be a finite group of diffeomorphisms of $N$.
Suppose that $k$ is an integer $\ge 3$ or that $k=\infty$. 
Then a generic,  $G$-invariant, $C^k$ Riemannian metric on $N$
is bumpy in the following sense:
 no closed, minimal immersed submanifold $M$
of $N$ has a nontrivial jacobi field.
\end{theorem}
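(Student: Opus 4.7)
My plan is to deduce Theorem~\ref{main-theorem} from the bumpy metrics theorem of~\cite{white-space}---which gives the conclusion in the restricted setting of $G$-invariant embedded minimal submanifolds with $G$-invariant jacobi fields and $C^k$ metrics---by a short sequence of reductions.  The first step is to invoke Theorem~\ref{to-infinity-theorem} to reduce the $C^\infty$ case to the $C^k$ case, so I may assume from now on that $k$ is finite and at least $3$.

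Given a bad triple $(\gamma, \iota \colon M \to N, v)$ in which $\iota$ is a closed minimal immersion and $v$ a nontrivial jacobi field, my aim is to construct from it a triple $(\gamma, M^*, v^*)$ in which $M^* \subset N$ is $G$-invariant and embedded and $v^*$ is a $G$-invariant jacobi field on $M^*$; by~\cite{white-space}, the set of metrics admitting such a triple is meagre in the $G$-invariant $C^k$ metrics.  When $\iota$ is an embedding with image $M$ and $M$ is not $G$-invariant, I would set $M^* = \bigcup_{[g] \in G/H} gM$, where $H = \stab_G(M)$; for generic $\gamma$ this orbit union is disjoint and hence is a $G$-invariant embedded minimal submanifold, and the jacobi field $v$ (replaced by its $H$-equivariant average if that average is nonzero) extends to a $G$-invariant jacobi field on $M^*$ by $v^*|_{gM}(gx) = g_* v(x)$.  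When $\iota$ factors as a finite cover of an embedded $\Sigma \subset N$, the jacobi fields on $M$ that descend to $\Sigma$ are handled by applying the preceding step to $\Sigma$.

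What remains---and what the paper explicitly flags as the delicate part---are the twisted jacobi fields: those on a $G$-invariant embedded minimal submanifold, or on the fibers of a finite cover, that lie in a nontrivial irreducible representation $W$ of $G$ or of the deck group.  The collection of such data is countable (countably many topological types of finite covers, each with finitely many isotypic components), so Baire category permits me to address each $W$ in turn.  The most natural route is an auxiliary construction: replace $N$ by a cover $\tilde N \to N$, or by a product with $W$, on which a $W$-type field on $M$ lifts to a genuinely $\tilde G$-invariant jacobi field for some enlarged symmetry group $\tilde G \supset G$, and then apply~\cite{white-space} in that larger setting.  The main obstacle is to carry out this construction in a way that does not require reproving the Fredholm/Sard--Smale analysis of~\cite{white-space}; this is the step that makes the reduction nontrivial and is where I would expect the main work of the proof to lie.
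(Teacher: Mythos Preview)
Your reductions from $C^\infty$ to $C^k$ and from an arbitrary immersed $\Sigma$ to a $G$-invariant almost-embedded $M^*$ (of which $\Sigma$ is a finite cover of a component) are the right first moves, and they match the paper.  But the decisive step---handling jacobi fields that lie in a nontrivial representation of $G$ or of the deck group---is left as a hope rather than an argument.  Your averaging trick already fails as soon as the $H$-average of $v$ vanishes, which is exactly the twisted case; and the proposed remedy of passing to a cover $\tilde N\to N$ or to ``a product with $W$'' does not work in any straightforward way.  A metric pulled back from $N$ lies in a thin subspace of the $\tilde G$-invariant metrics on $\tilde N$, so genericity in the latter says nothing about genericity in $\Gamma_k$; and the product construction has no evident geometric meaning for minimal submanifolds.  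You yourself flag this as ``where the main work of the proof would lie,'' and as written the proposal does not supply it.

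The paper avoids this difficulty entirely by \emph{not} trying to convert a non-invariant jacobi field into an invariant one.  Instead it regards ``some $p$-sheeted cover of a component of $M$ has a nontrivial jacobi field'' as a closed subset $\Ss^p$ of the Banach manifold $\MM$ of pairs $(\gamma,M)$ with $M$ a $G$-invariant almost-embedded minimal surface, and shows directly that $\Ss^p\setminus C_\MM$ is nowhere dense.  The key device is a conformal perturbation $\gamma_t=(1+tF)\gamma$ with $F\ge 0$ vanishing to second order along $M$: then $M$ stays $\gamma_t$-minimal, while every eigenvalue of the jacobi operator on every finite cover of $M$ is strictly increasing in $t$ (since the Rayleigh quotient of any eigenfunction---which cannot vanish on an open set---strictly increases).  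Hence for small $t>0$ no cover has a zero eigenvalue, so $(\gamma_t,M)\notin\Ss^p$.  Combined with the elementary projection lemma (Theorem~\ref{key-projection-theorem}), this gives that each $\pi(\Ss^p)$ is meager in $\Gamma_k$, and the union over $p$ finishes the proof.  The point is that existence of a jacobi field is an eigenvalue condition, and eigenvalues can be moved by a perturbation that keeps $M$ fixed; no representation-theoretic untwisting is needed.
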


Note that the minimal submanifold $M$ is not required to be $G$-invariant,
and that even if it is $G$-invariant, the jacobi fields referred to need not be
$G$-invariant.

We will prove a series of results that together imply Theorem~\ref{main-theorem}.

\begin{definition}
An ``almost embedded'' surface is an immersed surface that has multiplicity one on an open dense
subset.
\end{definition}

\begin{theorem}[Structure Theorem]\label{structure-theorem}
Let $k$ be an integer $\ge 3$.
Let $\Gamma=\Gamma_k$ be the space of $G$-invariant, $C^k$ Riemannian
metrics on a smooth manifold $N$.  
Let $\MM=\MM_k$ be the space of pairs $(\gamma, M)$ where $\gamma\in \Gamma_k$
and $M$ is a closed, almost embedded, $G$-invariant, 
$\gamma$-minimal surface in $N$.   Then $\MM$ is a separable, $C^{k-2}$ Banach
manifold, and the map
\begin{align*}
&\pi: \MM\to \Gamma  \\
&\pi(\gamma,M)= \gamma
\end{align*}
is a $C^{k-2}$ Fredholm map of Fredholm index $0$.   Furthermore, $(\gamma,M)$ is 
a critical point of $\pi$ if and only if $M$ has a nontrivial, $G$-equivariant jacobi field.
\end{theorem}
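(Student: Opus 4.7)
The plan is to adapt the implicit-function-theorem construction from~\cite{white-space} to the $G$-equivariant and almost-embedded setting. Fix $(\gamma_0,M_0)\in \MM$, and let $\iota_0:\Sigma\to N$ be a $G$-equivariant immersion representing $M_0$ (with $G$ acting on $\Sigma$ so that $\iota_0$ is equivariant). Let $\nu$ denote the normal bundle of $\iota_0$ with respect to $\gamma_0$; the $G$-action lifts to $\nu$. For small $G$-equivariant $C^{k-1,\alpha}$ sections $u$ of $\nu$, the normal exponential map produces a closed, $G$-invariant, almost embedded immersion $M_u$ of $\Sigma$ into $N$. Let $\mathcal{S}$ denote the Banach space of such sections and $W$ the corresponding space of $C^{k-3,\alpha}$ sections of $\nu$. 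For a neighborhood $V$ of $\gamma_0$ in $\Gamma$ and a neighborhood $U$ of $0$ in $\mathcal{S}$, define
\[
F:V\times U\to W, \qquad F(\gamma,u) = \mathbf{H}_\gamma(M_u),
\]
where $\mathbf{H}_\gamma(M_u)$ denotes the $\gamma$-mean-curvature vector of $M_u$ expressed (via the exponential map) as a section of $\nu$. Then $\MM\cap(V\times U)=F^{-1}(0)$.

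Next I would apply the implicit function theorem. The partial derivative $D_u F|_{(\gamma_0,0)}$ is the restriction to $\mathcal{S}$ of the Jacobi operator $L_{M_0,\gamma_0}$, a formally self-adjoint second-order elliptic operator on a closed manifold that commutes with $G$; hence its restriction to $\mathcal{S}$ is Fredholm. The partial derivative $D_\gamma F|_{(\gamma_0,0)}$, by the transversality computation of~\cite{white-space}, can be made to produce any prescribed variation of mean curvature supported near any $G$-orbit contained in the (open, dense) embedded locus of $M_0$; averaging a local metric perturbation over the finite group $G$ preserves $G$-invariance. Combining these gives surjectivity of $DF$ at $(\gamma_0,0)$ with complemented kernel, so the implicit function theorem endows $\MM$ with the structure of a separable $C^{k-2}$ Banach manifold near $(\gamma_0,M_0)$.

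For the Fredholm and critical-point claims, a short linear-algebra argument identifies $\ker D\pi_{(\gamma_0,M_0)}$ with $\ker L_{M_0,\gamma_0}|_{\mathcal{S}}$ and $\operatorname{coker} D\pi_{(\gamma_0,M_0)}$ with $\operatorname{coker} L_{M_0,\gamma_0}|_{\mathcal{S}}$. Self-adjointness of $L_{M_0,\gamma_0}$ on the $G$-invariant subspace (which it preserves) makes these dimensions equal, so $\pi$ is Fredholm of index zero. A pair $(\gamma,M)$ is a critical point of $\pi$ precisely when this kernel is nontrivial, that is, when $M$ admits a nontrivial $G$-equivariant Jacobi field.

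The main obstacle is the transversality step: showing that in the almost-embedded setting every nontrivial $G$-equivariant cokernel element of $L_{M_0,\gamma_0}$ can be paired nontrivially against some localized $G$-invariant metric perturbation. By self-adjointness such an element is itself a $G$-equivariant Jacobi field, so by unique continuation for the elliptic Jacobi operator it cannot vanish identically on the open embedded locus of $M_0$; this lets one support the metric perturbation on a $G$-orbit inside that locus, where the computation reduces exactly to the embedded case treated in~\cite{white-space}. The remaining points — smoothness and regularity class of $F$, separability of $\MM$, and independence of the construction from the choice of $\iota_0$ — are routine translations of the arguments in that paper.
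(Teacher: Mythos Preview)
Your proposal is correct and follows essentially the same approach as the paper: both reduce to the implicit-function-theorem argument of~\cite{white-space}, adapted to the $G$-equivariant, almost-embedded setting via averaging over $G$ and unique continuation on the embedded locus. The paper's proof simply cites that reference and records the required modifications (together with the countability of closed manifolds up to diffeomorphism, to handle separability of the full $\MM$), whereas you sketch the underlying construction in more detail.
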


\begin{proof}
If we fix the diffeomorphism type of $M$, if $G$ is the trivial group, and if we replace 
``almost embedded" by ``embedded",
this is Theorem~2.1  of~\cite{white-space}.  
See section~7 of that paper for the extension to almost embedded
surfaces.  (In that paper, ``almost embedded surfaces"
are called ``simply immersed surfaces".) 
For general groups $G$, exactly the same proofs work
provided one replaces ``metric", ``surface", and ``vectorfield"
by ``$G$-invariant metric", ``$G$-invariant surface'', and ``$G$-equivariant vectorfield".
Finally,
the result when we do not fix the diffeomorphism type of $M$ follows because there
are only countably many closed manifolds up to diffeomorphism.
(Thus $\MM$ will consist of countably many components corresponding to the 
countable number of diffeomorphism types for $M$.)
\end{proof}

If $\pi$ is a $C^k$ Fredholm map (with $k\ge 1$) of Fredholm index $0$ between separable Banach
spaces, then it follows from the Sard-Smale Theorem~\cite{smale-sard} that the image under $\pi$ of a meager
set is meager.  The following theorem implies (by taking countable unions) a slightly stronger version of that result.

\begin{theorem}\label{key-projection-theorem}
Suppose that $k\ge 1$, that $\MM$ and $\Gamma$ are separable $C^k$ Banach manifolds,
and that $\pi: \MM\to\Gamma$ is a $C^1$ Fredholm map of Fredholm index $0$.
Suppose that $\Ss$ is a closed subset of $\MM$ and that $\Ss\setminus C_\MM$ is nowhere
dense in $\MM$, where $C_\MM\subset \MM$ is the set of critical points of $\pi$.
Then $\pi(\Ss)$ is a meager subset of~$\Gamma$.
\end{theorem}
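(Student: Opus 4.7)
The plan is to split $\Ss$ into its critical and noncritical parts and handle each separately. For the critical part, $\pi(\Ss\cap C_\MM)\subseteq\pi(C_\MM)$, and the Sard--Smale theorem applies because $\pi$ is $C^1$ and Fredholm of index $0$ (the condition $k>\max(\text{index},0)$ reduces to $k\ge 1$ here), yielding that $\pi(C_\MM)$ is meager in $\Gamma$, and hence so is $\pi(\Ss\cap C_\MM)$.

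For the noncritical part, at any $p\in\MM\setminus C_\MM$ the derivative $D\pi_p$ is surjective and, because the Fredholm index is $0$, also injective, hence a Banach space isomorphism. The inverse function theorem then gives a neighborhood $U_p$ of $p$ on which $\pi$ restricts to a diffeomorphism onto an open subset of $\Gamma$. The critical set $C_\MM$ is closed (the invertible operators form an open subset of the bounded operators, and $D\pi$ depends continuously on the base point), so $\MM\setminus C_\MM$ is open. Since $\MM$ is a separable Banach manifold it is second countable, hence Lindel\"of, so $\MM\setminus C_\MM$ admits a countable cover by such neighborhoods $U_i$, with $V_i:=\pi(U_i)$ open in $\Gamma$ and $\pi|_{U_i}:U_i\to V_i$ a homeomorphism.

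Now the hypothesis that $\Ss\setminus C_\MM$ is nowhere dense in $\MM$ implies that $(\Ss\setminus C_\MM)\cap U_i$ is nowhere dense in $U_i$, since a nowhere dense set intersected with an open subset is nowhere dense in that subset. The homeomorphism $\pi|_{U_i}$ transports this to a nowhere dense subset of the open set $V_i\subseteq\Gamma$; a brief point-set check shows that a set that is nowhere dense in an open subset is also nowhere dense in the ambient space. Taking the countable union, $\pi(\Ss\setminus C_\MM)=\bigcup_i\pi((\Ss\setminus C_\MM)\cap U_i)$ is a countable union of nowhere dense subsets of $\Gamma$, and combining with the critical part shows that $\pi(\Ss)$ is meager.

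I do not anticipate a serious obstacle; the argument is essentially bookkeeping that leverages Sard--Smale for critical points and the observation that index-$0$ regularity forces $\pi$ to be a local diffeomorphism elsewhere. The closedness of $\Ss$ is not actually needed in this proof---only nowhere-density of $\Ss\setminus C_\MM$ matters---though closedness is a natural hypothesis for the applications in which one wants to take countable unions.
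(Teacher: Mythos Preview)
Your proof is correct and follows essentially the same route as the paper: split $\Ss$ into its critical and regular parts, invoke Sard--Smale on the former, and on the latter use that an index-$0$ Fredholm map is a local diffeomorphism at regular points, covering $\MM\setminus C_\MM$ by countably many such charts and pushing forward nowhere-density. The only cosmetic difference is that the paper works with closed charts $\overline{U_i}$ (so that each $\pi(\Ss\cap\overline{U_i})$ is closed as well as nowhere dense, which is where the paper uses closedness of $\Ss$), whereas you work with open $U_i$ and correctly observe that closedness of $\Ss$ is not actually needed.
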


\begin{proof}
By separability and by definition of ``regular point", we can cover the set of regular points
of $\pi$ by countably many open sets $U_i$ (with $C^k$ boundaries) 
such that $\pi$ maps each $\overline{U_i}$
diffeomorphically onto its (closed) image.
 Now
\begin{equation}\label{union}
\begin{aligned}
\pi(\Ss)
&= 
\pi(\Ss\setminus C_\MM) \cup \pi(\Ss\cap C_\MM)
\\
&= 
\cup_i \pi(\Ss\cap \overline{U_i}) \cup \pi(\Ss\cap C_\MM)
\\
&\subset
\cup_i \pi(\Ss\cap \overline{U_i}) \cup \{\text{critical values of $\pi$} \}.
\end{aligned}
\end{equation}
Each $\Ss\cap \overline{U_i}$ is closed and nowhere dense, so $\pi(\Ss\cap \overline{U_i})$
is closed and nowhere dense.  Also, the set of critical values of $\pi$ is a meager subset
of $\Gamma$ by the Sard-Smale Theorem~\cite{smale-sard}.
Hence  $\pi(\Ss)$ is meager in $\Gamma$ by~\eqref{union}.
\end{proof}

Fix an integer $k\ge 3$, and let $\MM_k$ be as in the Structure Theorem~\ref{structure-theorem}.

\begin{definition}
For positive integers $p$, let $\Ss^p= \Ss^p_k$ be the set of $(\gamma,M)\in \MM_k$
for which the following holds: there is a $p$-sheeted covering $M'$ of a connected
component of $M$ such that $M'$ has a nontrivial jacobi field.
\end{definition}

The following lemma states that $\Ss^p$ satisfies the hypotheses of 
 theorem~\ref{key-projection-theorem}:

\begin{lemma}\label{main-lemma}
The set $\Ss^p=\Ss^p_k$ is a closed subset of $\MM=\MM_k$,
and $\Ss^p\setminus C_\MM$ is nowhere dense in $\MM$.
\end{lemma}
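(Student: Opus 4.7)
The plan is to prove closedness of $\Ss^p$ by a standard compactness argument on jacobi fields, and nowhere density of $\Ss^p \setminus C_\MM$ by applying Sard-Smale to an auxiliary Banach manifold of $p$-covered minimal immersions. For closedness, I would take a convergent sequence $(\gamma_n, M_n) \to (\gamma, M)$ in $\MM$ with $(\gamma_n, M_n) \in \Ss^p$, and for each $n$ choose a component $C_n \subset M_n$ and an index-$p$ subgroup $K_n \subset \pi_1(C_n)$ yielding a $p$-sheeted cover $C_n' \to C_n$ carrying a nontrivial jacobi field $v_n$ normalized to $\|v_n\|_{L^2}=1$. Convergence in $\MM$ preserves diffeomorphism type locally, so after a subsequence all $C_n$ correspond to a single component $C$ of $M$; since $\pi_1(C)$ has only finitely many index-$p$ subgroups, a further subsequence makes all $K_n$ equal a fixed $K$, and the covers $C_n' \to C'$ converge in $C^{k-1}$. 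Elliptic regularity for the smoothly varying family of Jacobi operators then yields, on a subsequence, $v_n \to v$ in $C^{k-1}$ with $\|v\|_{L^2}=1$, so $v$ is a nontrivial jacobi field on $C'$ and $(\gamma, M) \in \Ss^p$.

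For nowhere density, I would first use that $C_\MM$ is closed (the critical set of a Fredholm map is closed), so it suffices to show that no open subset of $\MM \setminus C_\MM$ lies entirely in $\Ss^p$. Fix a regular point $(\gamma_0, M_0) \in \MM \setminus C_\MM$, and take a neighborhood $U$ on which $\pi$ is a local diffeomorphism onto $V \subset \Gamma$, writing $U = \{(\gamma, M(\gamma)) : \gamma \in V\}$. Since $M(\gamma_0)$ has finitely many components $C_j$ and each $\pi_1(C_j)$ has finitely many index-$p$ subgroups $K_{j,i}$, and these data deform smoothly with $\gamma$, we obtain finitely many smooth families $\gamma \mapsto C'_{j,i}(\gamma) \to N$ of $p$-covered minimal immersions. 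It suffices to show, for each $(j,i)$, that the set of $\gamma \in V$ for which $C'_{j,i}(\gamma)$ carries a nontrivial jacobi field is meager. For this I would fix $(j,i)$ and introduce the auxiliary Banach manifold $\tilde\MM$ of pairs $(\gamma, \phi)$ where $\gamma \in \Gamma$ and $\phi: \Sigma \to N$ is a $\gamma$-minimal immersion of a fixed topological source $\Sigma$ (modeled on $C'_{j,i}$) factoring as a $p$-fold covering onto an almost embedded connected surface embedded in $N$. Rerunning the proof of Theorem~\ref{structure-theorem} --- this time without requiring $\phi(\Sigma)$ to be $G$-invariant --- should show that $\tilde\MM$ is a separable $C^{k-2}$ Banach manifold and $\tilde\pi: \tilde\MM \to \Gamma$ is Fredholm of index zero, with critical points exactly those $(\gamma, \phi)$ carrying a nontrivial jacobi field. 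Sard-Smale then makes the critical values meager in $\Gamma$, and since $\gamma \mapsto (\gamma, C'_{j,i}(\gamma))$ is a local section of $\tilde\pi$, the desired meagerness follows.

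The main obstacle will be verifying that $\tilde\MM$ has the claimed Banach manifold and Fredholm structure when $\phi(\Sigma)$ is not required to be $G$-invariant. The key transversality input is that $G$-invariant metric perturbations surject, modulo the image of the Jacobi operator on $\phi(\Sigma)$, onto normal vector fields of $\phi(\Sigma)$. This should work because the $G$-orbit of $\phi(\Sigma)$ consists of finitely many distinct surfaces, so some open region of $\phi(\Sigma)$ is disjoint from its other $G$-translates; a $G$-invariant metric perturbation supported in a thin tube around such a region acts on $\phi(\Sigma)$ exactly as an unconstrained perturbation would, at which point the transversality argument of \cite{white-space} goes through.
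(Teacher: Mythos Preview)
Your closedness argument is fine (the paper simply asserts this as clear). The nowhere-density argument, however, has a genuine gap at exactly the point you flag as ``the main obstacle'' --- but the obstacle is not the one you identify. The problem is not the $G$-invariance of the metric; it is that $\phi:\Sigma\to N$ is a $p$-fold cover of its image $M_0$. If you define $\tilde\MM$ by requiring $\phi$ to factor through a $p$-fold covering of an almost embedded surface, then locally $\tilde\MM$ is just parametrized by $(\gamma,M_0)$, and the critical points of $\tilde\pi$ detect jacobi fields on the \emph{base} $M_0$, not on the cover $\Sigma$ --- so Sard--Smale says nothing about what you want. If instead you drop the covering constraint and let $\tilde\MM$ be all minimal immersions $\Sigma\to N$ with $\gamma\in\Gamma$, the implicit function theorem fails at a $p$-fold cover $\phi_0$ whenever $\Sigma$ carries a jacobi field $w$ whose fiberwise sum over $\Sigma\to M_0$ vanishes: any variation $\partial_\gamma H\cdot\dot\gamma$ of the mean curvature induced by a metric perturbation on $N$ is pulled back from $M_0$ (all $p$ sheets sit at the same point with the same tangent plane), hence is constant on fibers and pairs to zero against such $w$. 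At a point of $\Ss^p\setminus C_\MM$ the base component has no jacobi field, so every jacobi field on the cover has vanishing fiber-sum --- transversality fails exactly at the points you care about. This is precisely the phenomenon the paper's introduction warns about: the space of immersed minimal submanifolds is not a Banach manifold at multiple covers.

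The paper sidesteps this entirely with an explicit, elementary construction. Given $(\gamma,M)$ with $\gamma$ smooth, it builds a one-parameter family $\gamma_t=(1+tF)\gamma$ of $G$-invariant metrics with $F\ge 0$, $F=0$ and $DF=0$ along $M$, and $D^2F$ positive in some normal direction at some point. Then $M$ stays minimal for every $\gamma_t$, while each eigenvalue of the jacobi operator on each $p$-sheeted cover of each component of $M$ is strictly increasing in $t$ (by unique continuation for eigenfunctions and monotonicity of the Rayleigh quotient). Hence $(\gamma_t,M)\notin\Ss^p$ for all small $t>0$, so $(\gamma,M)\in\overline{\MM\setminus\Ss^p}$. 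A second step --- approximating a general $C^k$ metric at a regular point by smooth metrics and using that $\pi$ is a local diffeomorphism there --- extends this to all of $\MM\setminus C_\MM$.
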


\begin{proof}
Clearly $\Ss^p$ is a closed subset of $\MM$.

To show that $\Ss^p\setminus C_\MM$ is nowhere dense, 
we show that the closure of $\MM\setminus \Ss^p$ contains all of $\Ss^p\setminus C_\MM$.

{\bf Step 1}: The closure of $\MM\setminus \Ss^p$ contains all $(\gamma, M)\in \MM$
such that $\gamma$ is smooth (i.e, $C^\infty$).

Proof of step 1:  Let $(\gamma, M)\in \MM$ be such that $\gamma$ is smooth, which implies that
 $M$
is also smooth.

Let $X$ be the set of $p$-sheeted coverings of connected components of $M$.
Note that $X$ is a finite set:
\[
   X = \{M_1, M_2, \dots, M_n\}.
\]
(Finiteness of $X$ follows from the fact~\cite{hall}*{\S2.4} that a finitely generated group
has at most finitely many subgroups of any given finite index.)

Let $U\subset N$ be a small open set such that $M\cap U$ is a nonempty,
embedded submanifold of $N\cap U$.
Let $f:U\to [0,\infty)$ be a smooth function that is compactly supported in $U$
such that 
\[
   \text{$f=0$ and $Df=0$ on $M\cap U$}
\]
and such that 
\[
   D^2f(x)\nu >0
\]
for some point $x\in M\cap U$ and some normal vector $\nu$ at $x$.
Extend $f$ to all of $N$ by letting $f\equiv 0$ on $N\setminus U$.

Now let $F$ be obtained from $f$ by summing over the group $G$:
\[
    F(y) = \sum_{g\in G} f(g(y)).
\]
Finally, for $t\ge 0$, let $\gamma_t$ be the Riemannian metric
\[
   \gamma_t = (1 + t F) \gamma.
\]

By choice of $F$, we see that  $F=0$ and $DF=0$ on $M$.
It follows that $M$ is minimal with respect to the metric $\gamma_t$, and thus that
\[
  (\gamma_t,M)\in \MM_k.
\]

Let $\lambda_i(M_j,t)$ be the $i^{\rm th}$ eigenvalue of the jacobi operator of $M_j$ with
respect to the metric $\gamma_t$.  Note that $\lambda_i(M_j,t)$ depends continuously on $t$,
and in fact is a strictly increasing function of $t$.  (This is because (1) an eigenfunction
cannot vanish on any nonempty open set, and (2) if a function on $M_j$ does not vanish
on any nonempty open set, then its Rayleigh quotient strictly increases as $t$ increases.)

It follows that there is an $\eps>0$ such that for $t\in (0,\eps)$, none of the $\lambda_i(M_j,t)$
vanish.

Hence (for such $t$) $(\gamma_t,M)\in \MM\setminus \Ss^p$, so 
  $(\gamma, M) \in \overline{\MM\setminus \Ss^p}$.
  This completes the proof of step 1.

{\bf Step 2}: $\overline{\MM\setminus \Ss^p}$ contains all of $\MM\setminus C_\MM$.

{\bf Proof}: Let $(\gamma, M)\in \MM\setminus C_\MM$.
Let $\gamma_i$ be a sequence of smooth (i.e., $C^\infty$) metrics in $\Gamma=\Gamma_k$ 
such that $\gamma_i\to \gamma$
in $C^k$.  Since $(\gamma,M)$ is a regular point of $\pi$, it follows that there
exist (for all sufficiently large $i$) 
surfaces\footnote{these surfaces are unrelated to the $M_i$ in step 1.} $M_i$ such that $(\gamma_i,M_i)\in \MM$ 
and such that $(\gamma_i, M_i)\to (\gamma, M)$.

By step 1, $\overline{\MM \setminus \Ss^p}$ contains each of the $(\gamma_i, M_i)$,
and thus it also contains their limit $(\gamma, M)$.
This completes the proof of step 2 and therefore the proof of lemma~\ref{main-lemma}.
\end{proof}

\begin{remark}
The reader may wonder why it was necessary to break the proof into two steps.
The problem is that if $M$ is a minimal with respect to a $C^k$ metric, then $M$ need not
be $C^k$. (It is $C^{k-1,\alpha}$ for every $\alpha\in (0,1)$.)
If $M$ is not $C^k$, then there is no $C^k$ function $f$ having the properties required in step 1.
\end{remark}

\begin{corollary}\label{key-corollary}
The set $\pi(\cup_p \Ss^p)$ is a meager subset of $\Gamma_k$.
\end{corollary}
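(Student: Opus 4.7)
The plan is to combine Theorem~\ref{key-projection-theorem}, Lemma~\ref{main-lemma}, and the Structure Theorem~\ref{structure-theorem}, together with the elementary fact that a countable union of meager sets is meager.

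First, I would verify that the hypotheses of Theorem~\ref{key-projection-theorem} apply to each $\Ss^p$ individually. By the Structure Theorem, $\MM = \MM_k$ and $\Gamma = \Gamma_k$ are separable Banach manifolds and $\pi : \MM \to \Gamma$ is a $C^{k-2}$ Fredholm map of index $0$; since $k \ge 3$, $\pi$ is at least $C^1$, as required. By Lemma~\ref{main-lemma}, $\Ss^p$ is a closed subset of $\MM$ and $\Ss^p \setminus C_\MM$ is nowhere dense. Thus all the hypotheses of Theorem~\ref{key-projection-theorem} are satisfied with $\Ss = \Ss^p$.

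Second, I would apply Theorem~\ref{key-projection-theorem} to deduce that $\pi(\Ss^p)$ is a meager subset of $\Gamma_k$ for every positive integer $p$. Since
\[
  \pi\!\left(\bigcup_p \Ss^p\right) = \bigcup_p \pi(\Ss^p)
\]
is a countable union of meager sets, and the countable union of meager sets in a topological space is meager, the desired conclusion follows immediately.

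There is no real obstacle here: all the substantive work has already been carried out in the Structure Theorem and in Lemma~\ref{main-lemma} (where the subtle ``two step'' argument comparing $C^k$ metrics with smooth metrics was used to verify nowhere-density of $\Ss^p \setminus C_\MM$). The corollary is essentially a bookkeeping step that packages these results together via the Sard--Smale-type projection Theorem~\ref{key-projection-theorem}. The only thing to keep in mind is that one should apply Theorem~\ref{key-projection-theorem} separately for each $p$ rather than try to apply it directly to the union $\bigcup_p \Ss^p$, since that union need not be closed in $\MM$.
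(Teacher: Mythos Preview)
Your proposal is correct and follows exactly the same approach as the paper's proof: apply Theorem~\ref{key-projection-theorem} (with hypotheses supplied by the Structure Theorem and Lemma~\ref{main-lemma}) to each $\Ss^p$ separately, then take the countable union. Your additional remark about why one should not try to apply the projection theorem directly to $\bigcup_p \Ss^p$ is a nice clarification not made explicit in the paper.
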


\begin{proof}
By theorem~\ref{key-projection-theorem} and lemma~\ref{main-lemma}, 
   $\pi(\Ss^p)$ is a meager subset of $\Gamma_k$.
Thus $\pi(\cup_p\Ss^p)=\cup_p \pi(\Ss^p)$ is also a meager subset of $\Gamma_k$.
\end{proof}

We can now prove the main theorem for $3\le k <\infty$:

\begin{theorem}\label{finite-case-theorem}
Let $k$ be an integer such that $k\ge 3$.
For a generic $G$-invariant metric $\gamma$ in $\Gamma_k$ the following holds:
if $\Sigma$ is a closed, immersed, $\gamma$-minimal submanifold of $N$, then 
$\Sigma$ has no nontrivial jacobi fields.
\end{theorem}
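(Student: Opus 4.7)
The plan is to reduce the immersed, not-necessarily-$G$-invariant statement to Corollary~\ref{key-corollary} via $G$-symmetrization and unique continuation. Specifically, I would show that if $\Sigma$ is a closed, immersed, $\gamma$-minimal submanifold of $N$ with a nontrivial Jacobi field, then $\gamma\in \pi\bigl(\bigcup_p \Ss^p_k\bigr)$, which is meager in $\Gamma_k$; the complement is then the desired residual set of bumpy metrics.

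First I would reduce to the case that $\Sigma$ is connected: a nontrivial Jacobi field on $\Sigma$ restricts to a nontrivial Jacobi field on at least one component, so nothing is lost. For connected $\Sigma$ with immersion $\iota:\Sigma\to N$, one writes $\iota$ canonically as a composition
\[
\Sigma \xrightarrow{\ p\text{-fold cover}\ } M \hookrightarrow N,
\]
where $M\subset N$ is a connected, closed, almost embedded $\gamma$-minimal surface and $p\geq 1$ is the generic multiplicity of $\iota$. The existence of this factorization uses unique continuation for minimal surfaces: any two local sheets of $\iota$ that meet tangentially either coincide on an open set (and so are identified by the quotient relation) or intersect in a set of Hausdorff codimension at least one, so the quotient $M$ really is almost embedded in the sense used in the Structure Theorem.

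Next I would $G$-symmetrize by setting $\widetilde{M} = \bigcup_{g\in G} g(M)$. Because $\gamma$ is $G$-invariant, each $g(M)$ is again a connected, closed, almost embedded $\gamma$-minimal surface, and unique continuation forces any two of them either to coincide as sets (when $g$ lies in the stabilizer of $M$) or to meet in a set of lower dimension. Hence $\widetilde{M}$ is a closed, almost embedded, $G$-invariant, $\gamma$-minimal surface whose connected components (in the sense used by Theorem~\ref{structure-theorem}) are precisely the distinct images $g(M)$. Thus $(\gamma,\widetilde{M})\in \MM_k$, and since $\Sigma$ is a $p$-sheeted covering of the connected component $M$ of $\widetilde{M}$ with a nontrivial Jacobi field (no $G$-equivariance required), we obtain $(\gamma,\widetilde{M})\in \Ss^p_k$. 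Applying Corollary~\ref{key-corollary} then gives $\gamma\in \pi\bigl(\bigcup_p \Ss^p_k\bigr)$, a meager subset of $\Gamma_k$, completing the argument.

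The main obstacle I expect is verifying the factorization $\Sigma\to M$ in the second step: one must check that ``collapsing branches that agree to infinite order'' really yields an almost embedded surface in the precise sense of~\cite{white-space}, and that the resulting covering matches the notion of covering appearing in the definition of $\Ss^p$. Once this identification is set up, the $G$-symmetrization is clean thanks to unique continuation, and the reduction to Corollary~\ref{key-corollary} is essentially formal.
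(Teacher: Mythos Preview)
Your proposal is correct and follows essentially the same route as the paper: reduce to connected $\Sigma$, produce from it a $G$-invariant almost embedded $\gamma$-minimal surface having $\Sigma$ as a finite cover of one component, and invoke Corollary~\ref{key-corollary}. The only cosmetic difference is the order of operations: the paper first forms $\hat M=\bigcup_{g\in G} g(\Sigma)$ with multiplicities and then reduces to multiplicity one, whereas you first collapse $\Sigma$ to an almost embedded $M$ and then symmetrize to $\widetilde M=\bigcup_{g\in G} g(M)$; either ordering yields the same object, and the factorization concern you flag is precisely the point the paper handles (tersely) via unique continuation.
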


\begin{proof}
It suffices to prove the theorem when $\Sigma$ is connected.
Suppose $\Sigma$ is a closed, connected, immersed, $\gamma$-minimal surface and that 
 $\Sigma$ does have a nontrivial jacobi field.
Let $\hat{M}$ be the union (counting multiplicities) of the  images of $\Sigma$
under the elements of $G$.

By connectivity of $\Sigma$ and by unique continuation, the multiplicity of $\hat{M}$
is equal almost everywhere to some constant $m$.  Let $M$ be obtained from $\hat{M}$ by 
letting the multiplicity
be $1$ almost everywhere.  
Thus $M$ is almost embedded 
and $\hat{M}$ may be regarded as an $m$-sheeted covering of $M$.

Note that $(\gamma,M)\in \MM$ and that $\Sigma$ is a finite-sheeted covering of a connected
component of $M$.
Thus (in the notation of lemma~\ref{main-lemma}),
\[
    (\gamma,M) \in \cup_p\Ss^p
\]
so
\[
  \gamma\in \pi(\cup_p\Ss^p).
\]
The result follows immediately from corollary~\ref{key-corollary}.
\end{proof}

The $k=\infty$ case of theorem~\ref{main-theorem} follows from the $k<\infty$ case 
 (theorem~\ref{finite-case-theorem})
and the following general principle about $C^k$ and $C^\infty$:

\begin{theorem}\label{to-infinity-theorem}
Let $k$ be a finite natural number.
\begin{enumerate}
\item If $U$ is an open dense subset of $\Gamma_k$, then $U\cap \Gamma_\infty$ is an open
dense subset of $\Gamma_\infty$.
\item If $U$ is the intersection of a countable collection of open dense subsets of $\Gamma_k$,
then $U\cap \Gamma_\infty$ is the intersection of a countable collection of open dense subsets
of $\Gamma_\infty$.
\item If $W$ is a second-category subset of $\Gamma_k$ (i.e., if $W$ contains the intersection
of a countable collection of open dense subset of $\Gamma_k$), then $W\cap \Gamma_\infty$
is a second-category subset of $\Gamma_\infty$.
\end{enumerate}
\end{theorem}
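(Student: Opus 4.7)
The plan is to prove part~(1) first; parts (2) and (3) then follow quickly. Part~(2) is immediate by applying (1) to each factor of a countable intersection, and part~(3) follows from (2) by passing to a subset.

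For (1), openness of $U\cap\Gamma_\infty$ in $\Gamma_\infty$ is automatic from the continuity of the inclusion $\Gamma_\infty\hookrightarrow\Gamma_k$: the preimage of the open set $U$ is $U\cap\Gamma_\infty$. For density, let $V\subset\Gamma_\infty$ be a nonempty open set and pick $\gamma\in V$. Since a neighborhood basis of $\gamma$ in $\Gamma_\infty$ is given by traces of $C^k$-open balls in $\Gamma_k$, there is a $C^k$-open $V'\subset\Gamma_k$ with $\gamma\in V'$ and $V'\cap\Gamma_\infty\subset V$. By $C^k$-density of $U$ in $\Gamma_k$, the intersection $V'\cap U$ is nonempty and $C^k$-open. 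A standard mollification argument shows that smooth metrics are $C^k$-dense in $\Gamma_k$, so $V'\cap U$ contains a smooth metric $\gamma''$; this $\gamma''$ lies in $V\cap U\cap\Gamma_\infty$, which establishes density.

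For (2), write the given set as $\bigcap_n U_n$ with each $U_n$ open dense in $\Gamma_k$. Then $\bigl(\bigcap_n U_n\bigr)\cap\Gamma_\infty=\bigcap_n(U_n\cap\Gamma_\infty)$, and each factor is open dense in $\Gamma_\infty$ by (1). For (3), if $W\supset\bigcap_n U_n$ with each $U_n$ open dense in $\Gamma_k$, then $W\cap\Gamma_\infty\supset\bigcap_n(U_n\cap\Gamma_\infty)$, which is a countable intersection of open dense sets in $\Gamma_\infty$ by (2); hence $W\cap\Gamma_\infty$ is second-category.

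The density step in (1) is the only place requiring any content. Once one has reduced to producing a smooth metric in the nonempty $C^k$-open set $V'\cap U$, the standard density of $C^\infty$ in $C^k$ (with respect to the $C^k$-norm, via convolution with a mollifier) closes the argument. I do not expect any serious obstacle here.
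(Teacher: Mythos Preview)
Your overall structure—prove (1), then derive (2) and (3)—matches the paper exactly; the paper's entire proof is: ``The first assertion is trivially true, the second follows immediately from the first, and the third follows immediately from the second.'' Your treatments of (2) and (3) are fine and agree with the paper.

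The gap is in your density argument for (1). You claim that ``a neighborhood basis of $\gamma$ in $\Gamma_\infty$ is given by traces of $C^k$-open balls in $\Gamma_k$.'' If $\Gamma_\infty$ carries its natural $C^\infty$ Fr\'echet topology, this is false: that topology is strictly finer than the $C^k$-subspace topology, since basic neighborhoods are $C^j$-balls for arbitrarily large $j$, not merely $j=k$. So for a general $C^\infty$-open $V$ there need be no $C^k$-open $V'$ with $V'\cap\Gamma_\infty\subset V$, and your reduction breaks down. In fact, assertion (1) itself is problematic with the Fr\'echet topology: fix a smooth $\gamma_0$, let $B=\{\gamma\in\Gamma_\infty:\|\gamma-\gamma_0\|_{C^{k+1}}\le 1\}$, and let $F$ be the $C^k$-closure of $B$ in $\Gamma_k$. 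Then $F$ is closed and nowhere dense in $\Gamma_k$ (its elements have $1$-Lipschitz $k$-th derivative relative to $\gamma_0$, a condition destroyed by arbitrarily small $C^k$ perturbations), so $U=\Gamma_k\setminus F$ is open and dense; yet $U\cap\Gamma_\infty$ misses the nonempty $C^\infty$-open set $B$ entirely and hence is not dense in $\Gamma_\infty$. Your argument would go through verbatim if $\Gamma_\infty$ were given the $C^k$-subspace topology, but then $\Gamma_\infty$ is not a Baire space and (3) loses its force for the intended application. The paper supplies no more detail than ``trivially true,'' so this subtlety is not addressed there either.
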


\begin{proof} The first assertion is trivially true, the second follows immediately from the first,
and the third follows immediately from the second.
\end{proof}

\begin{proof}[Proof of theorem~\ref{main-theorem} for $k=\infty$]
Let $W_k$ be the set of $G$-invariant, $C^k$ metrics on $N$ for with
the following property: if $M$ is a closed, minimal immersed submanifold  of $N$,
then $M$ has no nontrivial jacobi fields.

By theorem~\ref{finite-case-theorem}, $W_3$ is a second-category subset of $\Gamma_3$.
Note that
\[
     W_\infty = W_3\cap \Gamma_\infty.
\]
Thus by theorem~\ref{to-infinity-theorem}, $W_\infty$ is a second-category subset 
of $\Gamma_\infty$.
\end{proof}

\section{Generalizations}\label{generalizations-section}

\subsection*{Conformal classes}
The main theorem, Theorem~\ref{main-theorem}, remains true if we replace $\Gamma_k$ by the set of $G$-invariant, $C^k$ metrics
in the conformal class of some specified $G$-invariant metric.  No changes are required in the
proof.

\subsection*{Partially fixed metrics}
Let $\gamma_0$ be a $G$-invariant, $C^k$ metric on $N$.
Let $U$ be a $G$-invariant open subset of $N$.

\begin{theorem}
Suppose $3\le k\le \infty$ and $j\le k$. 
Consider the class $\Gamma^*$ of $G$-invariant, $C^k$ metrics $\gamma$ on $N$ such that
$\gamma$ and $\gamma_0$ and their derivatives of order $\le j$ agree on $N\setminus U$.
Then a generic metric $\gamma$ in $\Gamma^*$ has the following property:
if $M$ is a closed, minimal immersed submanifold of $N$ and if each connected component
of $M$ intersects $U$, 
then $M$ has no nontrivial jacobi fields.
\end{theorem}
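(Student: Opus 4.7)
The plan is to follow the architecture of Sections~1--2 with $\Gamma_k$ replaced by the affine subspace $\Gamma^*$ and $\MM_k$ replaced by
\[
\MM^* \;=\; \{(\gamma,M)\in\MM_k : \gamma\in\Gamma^* \text{ and every component of } M \text{ meets } U\}.
\]
The ``every component meets $U$'' condition is built into $\MM^*$ because a component of $M$ disjoint from the $G$-invariant set $U$ is untouched by any admissible metric variation, so no transversality argument could reach it.

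The first step is the $*$-analog of the Structure Theorem~\ref{structure-theorem}: $\MM^*$ is a separable $C^{k-2}$ Banach manifold and $\pi^*:\MM^*\to\Gamma^*$ is a $C^{k-2}$ Fredholm map of index $0$ whose critical points are exactly those $(\gamma,M)$ carrying a nontrivial $G$-equivariant Jacobi field. The argument of~\cite{white-space} is local around $M$ and transcribes verbatim except for one substantive check: surjectivity of the linearisation of $\pi^*$ modulo the Jacobi kernel. Dually, one asks whether a Jacobi field $v$ on $M$ that is $L^2$-orthogonal to every mean-curvature variation induced by a $\dot\gamma$ whose $j$-jet vanishes on $N\setminus U$ must vanish. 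Such a $v$ vanishes on $M\cap U$; since every component of $M$ meets $U$, unique continuation for Jacobi fields on a connected minimal surface forces $v\equiv 0$ on each component. This is precisely where the component-intersects-$U$ hypothesis is used, and in my view it is the main technical point of the proof.

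Next I would replay Lemma~\ref{main-lemma} for $\Ss^{*,p}\subset\MM^*$ defined in the obvious way. In Step~1, rather than a single bump, for each connected component $M^{(\ell)}$ of $M$ I pick $x_\ell\in M^{(\ell)}\cap U$ and build a nonnegative smooth bump $f_\ell$ compactly supported in a small neighbourhood of $x_\ell$ inside $U$ with $f_\ell=Df_\ell=0$ on $M$ and $D^2f_\ell(x_\ell)\nu>0$ for some normal $\nu$; then symmetrise and sum, $F=\sum_\ell\sum_{g\in G} f_\ell\circ g$, still supported in $U$. The metric $\gamma_t=(1+tF)\gamma$ lies in $\Gamma^*$ because $F$ and all its derivatives vanish on $N\setminus U$; each $p$-sheeted cover $M_j$ covers some component meeting $U$ and therefore sees $F>0$ on a nonempty open subset, so all its Jacobi eigenvalues are strictly monotone in $t$, giving $(\gamma_t,M)\in\MM^*\setminus\Ss^{*,p}$ for small $t>0$. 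Step~2 then approximates a regular $(\gamma,M)\in\MM^*$ by metrics $\gamma_i\in\Gamma^*$ smoothed only inside $U$ (leaving $\gamma_i=\gamma_0$ on $N\setminus U$), to which Step~1 applies, and invokes the implicit function theorem to yield nearby $(\gamma_i,M_i)\in\MM^*$.

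The remainder is bookkeeping. Theorem~\ref{key-projection-theorem} combined with the new lemma gives the $*$-analog of Corollary~\ref{key-corollary}: $\pi^*(\cup_p\Ss^{*,p})$ is meager in $\Gamma^*$. The argument of Theorem~\ref{finite-case-theorem} reduces to a connected $\Sigma$, which by hypothesis meets $U$; its $G$-symmetrisation $\hat M$ and underlying almost-embedded surface $M$ have every component contained in the $G$-orbit of $\Sigma$, hence meeting $U$ by $G$-invariance of $U$, so $(\gamma,M)\in\MM^*\cap(\cup_p\Ss^{*,p})$ and $\gamma$ lies in the meager image. Finally the $k=\infty$ case follows from the evident $\Gamma^*$-analog of Theorem~\ref{to-infinity-theorem}, whose proof is identical.
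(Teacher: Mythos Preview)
Your proposal is correct and is precisely what the paper intends: its own proof of this theorem is the single sentence that the argument is ``almost exactly the same as the proof of Theorem~\ref{main-theorem},'' and you have correctly located the two places requiring care---confining both the metric variations in the Structure Theorem and the bump functions in Lemma~\ref{main-lemma} to $U$, and invoking unique continuation together with the ``every component meets $U$'' hypothesis. (One cosmetic slip: $F\equiv 0$ on $M$ by construction, so $M_j$ does not literally ``see $F>0$''; what matters, and what you clearly mean, is that $D^2F(\nu,\nu)\ge 0$ along $M$ with strict inequality on a nonempty open subset of each component, which is what forces the Rayleigh quotients to increase.)
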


The proof is almost exactly the same as the proof of Theorem~\ref{main-theorem}.
As with theorem~\ref{main-theorem}, the theorem remains true (with the same proof)
 if we replace $\Gamma^*$ by the set of metrics
in $\Gamma^*$ that are conformally equivalent to $\gamma_0$.

\section{An open problem}

In general, properties of closed minimal surfaces that are generic
with respect to varying metrics are also generic for minimal surfaces with boundary
with respect to variations of the boundary, and vice versa.
For example, for generic metrics, closed minimal surfaces have no nontrivial
jacobi fields, and, likewise, for generic boundary curves (with a fixed ambient metric), 
the minimal surfaces they bound have
no nontrivial jacobi fields that vanish at the boundary.

Thus Theorem~\ref{main-theorem} suggests the following conjecture:

\begin{conjecture}
Let $G$ be a finite set of isometries of $\RR^n$.
For a generic smooth,  closed, $G$-invariant, embedded submanifold of $\RR^n$,
none of the the smooth immersed minimal submanifolds it bounds have nontrivial jacobi
fields that vanish at the boundary.
\end{conjecture}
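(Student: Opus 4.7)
The plan is to mirror the proof of Theorem~\ref{main-theorem} with the role of ``varying metric'' replaced by ``varying boundary.'' Fix an integer $k\ge 3$, let $\Gamma$ be the space of smooth, closed, $G$-invariant, $C^k$ embedded submanifolds of $\RR^n$ of the appropriate dimension, and let $\MM$ be the space of pairs $(B,M)$ with $B\in\Gamma$ and $M$ a closed, almost embedded, $G$-invariant minimal submanifold of $\RR^n$ (for the fixed Euclidean metric) having $\partial M=B$ as an integer-multiplicity cycle. By an argument parallel to Theorem~\ref{structure-theorem} — using the Plateau-type implicit function theorem instead of the one used there — one expects $\MM$ to be a separable $C^{k-2}$ Banach manifold and the projection $\pi(B,M)=B$ to be a Fredholm map of Fredholm index $0$ whose critical points are precisely the pairs admitting a nontrivial $G$-equivariant Jacobi field vanishing on $\partial M$.

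With this structure in place, define $\Ss^p\subset\MM$ to consist of those $(B,M)$ for which some $p$-sheeted covering of a connected component of $M$ carries a nontrivial Jacobi field vanishing on its boundary. Theorem~\ref{key-projection-theorem} then reduces the conjecture to showing that $\Ss^p\setminus C_\MM$ is nowhere dense in $\MM$ for each $p$, exactly as in Corollary~\ref{key-corollary}; the reduction from arbitrary immersed minimal surfaces to almost embedded ones is handled verbatim as in Theorem~\ref{finite-case-theorem}, by taking $G$-orbits and collapsing multiplicities via unique continuation.

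The main obstacle — and the reason this is left as a conjecture — is the analog of Step~1 of Lemma~\ref{main-lemma}. In the metric-varying setting one chose a $G$-invariant conformal factor $(1+tF)$ with $F=0$ and $DF=0$ on $M$; this kept $M$ minimal (so $(\gamma_t,M)$ remains in $\MM$) while guaranteeing, via the Rayleigh quotient, strictly monotone motion of every Jacobi eigenvalue of every finite cover. No such perturbation is available on the boundary side: the ambient metric is fixed, so any $G$-equivariant deformation of $B$ forces an honest motion of the minimal surface $M$, and one cannot freeze $M$ (or a cover $M'$) as a minimal surface while shifting the parameter so as to push its Jacobi spectrum strictly off zero. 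Equivalently, one does not have a one-parameter family inside $\MM$ starting at $(B,M)$ along which the Jacobi operator of $M'$ varies in a controlled monotone fashion.

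A sensible line of attack would be to enlarge the parameter space — for example, jointly vary the boundary \emph{and} an ambient conformal bump supported in a small neighborhood of $M\setminus B$ — prove the analog of Lemma~\ref{main-lemma} there (where Step~1 goes through essentially unchanged), and then attempt to project onto the boundary factor. The difficulty is that projecting away from the auxiliary metric parameter destroys the Fredholm structure that Theorem~\ref{key-projection-theorem} requires, and one is left needing a genuinely new device — presumably some invariant forcing the Jacobi spectrum of a putative minimal surface to vary nontrivially under generic $G$-equivariant boundary perturbations — to substitute for the conformal-bump trick. I expect that supplying this substitute, rather than any of the formal bookkeeping, is where a proof of the conjecture would stand or fall.
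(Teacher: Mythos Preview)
The statement is a \emph{conjecture}, not a theorem: the paper offers no proof, and the author explicitly writes ``I do not know how to prove the conjecture as stated.'' So there is no paper proof to compare against, and your proposal is appropriately not a proof either --- it is an outline of the natural approach together with an honest identification of where it breaks down.

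Your diagnosis is consistent with, and in fact more detailed than, what the paper says. The paper only records that the trivial-group case is known (from~\cite{white-space1}*{3.3(5)}) and that the $G$-equivariant-only version follows by the same methods, but gives no analysis of the obstruction. Your pinpointing of Step~1 of Lemma~\ref{main-lemma} as the failure point is exactly right: the conformal bump $(1+tF)\gamma$ with $F=0$, $DF=0$ on $M$ is what allows one to keep $M$ minimal while strictly moving the Jacobi spectrum of every finite cover, and there is no evident analog of this when the metric is fixed and only the boundary moves. Your suggested workaround (enlarge the parameter space, then project) and your explanation of why it does not obviously succeed are reasonable; this is precisely the open problem.

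One small correction to your framing: you assert that the Banach-manifold/Fredholm structure on the boundary side ``is expected'' by analogy. In fact that part is not the issue --- for the trivial group it is established in~\cite{white-space1}, and the $G$-equivariant version goes through the same way. The genuinely missing ingredient is only the perturbation device you isolate.
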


If $G$ is the trivial group, this is true~\cite{white-space1}*{3.3(5)}.
More generally, the conjecture is true if we replace ``minimal submanifold'' by $G$-invariant minimal submanifold"
and ``jacobi field" by ``$G$-invariant
jacobi field".   However, I do not know how to prove the conjecture as stated.
Of course, the conjecture should remain true if $\RR^n$ is replaced by any
smooth Riemannian $n$-manifold.

\nocite{pedrosa-ritore}
\nocite{hoffman-wei}
\newcommand{\hide}[1]{}

\begin{bibdiv}

\begin{biblist}

\bib{abraham-bumpy}{article}{
   author={Abraham, R.},
   title={Bumpy metrics},
   conference={
      title={Global Analysis (Proc. Sympos. Pure Math., Vol. XIV, Berkeley,
      Calif., 1968)},
   },
   book={
      publisher={Amer. Math. Soc., Providence, R.I.},
   },
   date={1970},
   pages={1--3},
   review={\MR{0271994 (42 \#6875)}},
}

\bib{hall}{article}{
   author={Hall, Marshall, Jr.},
   title={A topology for free groups and related groups},
   journal={Ann. of Math. (2)},
   volume={52},
   date={1950},
   pages={127--139},
   issn={0003-486X},
   review={\MR{0036767 (12,158b)}},
}

\bib{smale-sard}{article}{
   author={Smale, S.},
   title={An infinite dimensional version of Sard's theorem},
   journal={Amer. J. Math.},
   volume={87},
   date={1965},
   pages={861--866},
   issn={0002-9327},
   review={\MR{0185604 (32 \#3067)}},
}

\bib{white-space1}{article}{
   author={White, Brian},
   title={The space of $m$-dimensional surfaces that are stationary for a
   parametric elliptic functional},
   journal={Indiana Univ. Math. J.},
   volume={36},
   date={1987},
   number={3},
   pages={567--602},
   issn={0022-2518},
   review={\MR{905611 (88k:58027)}},
   doi={10.1512/iumj.1987.36.36031},
}

\bib{white-space}{article}{
   author={White, Brian},
   title={The space of minimal submanifolds for varying Riemannian metrics},
   journal={Indiana Univ. Math. J.},
   volume={40},
   date={1991},
   number={1},
   pages={161--200},
   issn={0022-2518},
   review={\MR{1101226 (92i:58028)}},
   doi={10.1512/iumj.1991.40.40008},
}

\end{biblist}

\end{bibdiv}

\end{document}